\documentclass[12pt]{amsart}
\usepackage{setspace}
\usepackage{hyperref}
\usepackage[alphabetic]{amsrefs}
\usepackage{mathtools}
\usepackage{enumerate}
\usepackage{amsmath, amsthm, amssymb, thmtools, etoolbox}
\usepackage{pgfplots}
\usepackage{amsaddr}
\usepackage{comment}

\usepackage{mathrsfs}
\usetikzlibrary{arrows}
\usepackage{tikz-cd}
\textwidth6.2in \textheight8.5in \oddsidemargin0.00in
\evensidemargin0.00in
\usetikzlibrary{calc}
\allowdisplaybreaks

\theoremstyle{plain}
\newtheorem{thm}{Theorem}[section]
\newtheorem{defn}[thm]{Definition}

\newtheorem{prop}[thm]{Proposition}

\numberwithin{equation}{section}

\newcommand{\mydprime}{^{\prime\prime}\mkern-1.2mu}

\newcommand{\midarrow}{\tikz \draw[-triangle 90] (0,0) -- +(.1,0);}

\begin{document}
\title{Quantum Symmetry in Multigraphs (part II)}
\begin{abstract}
		This article is a continuation of \cite{Goswami2023}. In this article, we give an explicit construction of a \textbf{non-Bichon} type co-action on a multigraph that is, it preserves quantum symmetry of $(V,E)$ in our sense but not always in Bichon's sense (definition 3.7 of \cite{Goswami2023}). This construction itself is motivated from automorphisms of quantum graphs.
	\end{abstract}
 
 \author{Debashish Goswami}
\thanks{It is to be noted that Debashish Goswami is partially supported by J.C. Bose national fellowship awarded by D.S.T., Government of India.}
	\address{Statistics and Mathematics Unit, Indian Statistical Institute\\ 203, B.T. Road, Kolkata 700108, India\\\textnormal{email: \texttt{debashish\_goswami@yahoo.co.in}}}

	\author{Sk Asfaq Hossain}
\address{School of Mathematical Sciences\\ National Institute of Science Education and Research\\ Bhubaneswar, Odisha, India\\\textnormal{email: \texttt{asfaq1994@gmail.com}}}
 \dedicatory{Dedicated to the memory of Prof. K.R.Parthasarathy}
	\maketitle
 
\section{Introduction}
As this article is a continuation of \cite{Goswami2023}, before going to the content of this article, we briefly recall what has been done in \cite{Goswami2023}. We have generalised the well known notions of quantum symmetry in single edged graphs described by Bichon and Banica (\cite{Bichon2003} and \cite{Banica2005})to the realm of multigraphs. We use the term ``single edged" instead of ``simple" because we want to incorporate the presence of loops, that is, an edge with a single endpoint vertex, into the picture of simple graphs. In our framework, a multigraph consists of a finite vertex set $V$, a finite edge set $E$, a source map $s:E\rightarrow V$ and a target map $t:E\rightarrow V$. Moreover, we also assume that there is no isolated vertex, that is, each vertex is either source or target of some edge. Initially, the process of generalisation has lead to two different categories of co-actions on $(V,E)$ namely $\mathcal{C}^{Ban}_{(V,E)}$ and $\mathcal{C}^{Bic}_{(V,E)}$ coming from Banica's and Bichon's two different notions of quantum symmetry in single edged graphs.  Though the universal object in $\mathcal{C}^{Bic}_{(V,E)}$ exists and can be regarded as a \textbf{quantum automorphism group}, the universal object in $\mathcal{C}^{Ban}_{(V,E)}$ is \textbf{too large} to be called an automorphism group. Therefore we have introduced a new category of co-actions $\mathcal{C}^{sym}_{(V,E)}$ which sits between the former two and is deemed to be the correct generalisation of Banica's notion of quantum symmetry in the context of multigraphs.\par 
This article is dedicated to constructing \textbf{``non-Bichon"} type quantum automorphisms of a multigraph. By \textbf{``non-Bichon"} type we mean that the quantum automorphism constructed in this article might not be a member of $\mathcal{C}^{Bic}_{(V,E)}$ but always a member of $\mathcal{C}^{sym}_{(V,E)}$. It is worth mentioning that the approach taken in this article is drastically different than in \cite{Goswami2023}. Here we start with a co-action on the vertex level and analyse the induced action on the edge space, whereas previously in \cite{Goswami2023} we started with an action on the edge space $L^2(E)$ and induced quantum permutations on sets of initial and final vertices of the multigraph. \par  
The construction done here is largely motivated from the context of quantum graphs. Quantum graphs are non commutative generalisations of finite graphs which primarily arose from the study of quantum communication channels (\cite{duan2012zero}) and there has been quite a lot of work done in this direction (see \cite{weaver2015quantum}, \cite{chirvasitu2022random}, \cite{brannan2020bigalois} and references therein). Though there are different ways of describing  automorphisms of quantum graphs (see for instance, \cite{daws2022quantum}), it can be simply be seen as a trace preserving co-action on the non-commutative vertex algebra commuting with the adjacency operator. In this article, instead of starting with just the vertex algebra, we start with a co-action on a ``fibered" vertex algebra and determine the edge preserving condition via a co-action induced on the operator algebra of its GNS space. Though our construction is fairly simple, we believe that these ideas pave the way to describe a ``multigraph" version of quantum graph and its quantum automorphisms. \par
Now we give a brief layout of this article. Along with the notations and conventions introduced in section 2 of \cite{Goswami2023}, we introduce some extra notations in section \ref{extra_not} related to discussions in this article. In section \ref{C_(V,E)}, we define a category $\mathcal{C}_{(V,E)}$ consisting of co-actions on uniform multigraphs and describe its universal object $Q_{(V,E)}$. The reason we are considering the uniform case separately is that these co-actions might not be faithful in the non-uniform case. This issue has been resolved in section \ref{non_uniform} by restricting the co-action to a suitable Woronowicz subalgebra of $Q_{(V,E)}$. In section \ref{Q_(V,E)-action}, we show that the action of the CQG $Q_{(V,E)}$ on a uniform multigraph $(V,E)$ is an action of \textbf{non-Bichon} type. 

\section{More Notations}\label{extra_not}
Let $(V,E)$ be a multigraph (\textbf{not necessarily uniform}) with source and target maps $s:E\rightarrow V$ and $t:E\rightarrow V$. We also assume that there is no isolated vertex. Let us define $N=sup_{i,j\in V}\{|E^i_j|\}$. Once we fix an edge-labeling of $(V,E)$ (see definition 3.23 of \cite{Goswami2023}), any edge $\tau\in E$ can be written as 
\begin{equation*}
	\tau=(i,j)r\quad\text{where}\quad 1\leq r\leq N,\quad s(\tau)=i\quad\text{and}\quad t(\tau)=j.
\end{equation*}  	
We will be considering CQG co-actions on the algebra $C(V)\otimes C(X_N)$ where $X_N=\{1,2,...,N\}$. Any such co-action $\alpha$ can be written as 
\begin{equation*}
	\alpha(\chi_i\otimes \chi_r)=\sum^N_{\substack{s=1\\k\in V}}\chi_{k}\otimes \chi_{s}\otimes q^{ks}_{ir}.
\end{equation*}
 It is clear that the co-representation matrix $(q^{ks}_{ir})_{ks,ir}$ satisfy \textbf{quantum permutation relations} (see \cite{Wang1998}). As $\alpha$ can be also seen as a bi-unitary co-representation on the Hilbert space $L^2(V)\otimes L^2(X_N)$, it induces a co-action $Ad_{\alpha}$ on the operator space $B(L^2(V))\otimes B(L^2(X_N))$ given by the following formula: 
 \begin{equation*}
     Ad_{\alpha}(T)=\alpha(T\otimes 1)\alpha^*\quad\text{for all}\quad T\in B(L^2(V))\otimes B(L^2(X_N)).
 \end{equation*}We will also write that
 \begin{equation*}
 	B(L^2(V))\otimes B(L^2(X_N))=linear\:span\:\{e_{ij}\otimes f_{rs}|i,j\in V;r,s=1,..,N\}
 \end{equation*}
 where $e_{ij}$ and $f_{rs}$ are elementary matrices in $B(L^2(V))$ and $B(L^2(X_N))$ defined as
 \begin{equation*}
 	e_{ij}(\chi_{j'})=\delta_{jj'}\chi_i \quad\text{and} \quad f_{rs}(\chi_{s'})=\delta_{ss'}\chi_{r}
 \end{equation*}
 for all $i,j,j'\in V$ and $r,s,s'\in \{1,..,N\}$.
 \section{The category $\mathcal{C}_{(V,E)}$ and its universal object $Q_{(V,E)}$}\label{C_(V,E)}
 In this section and the next, we work with a fixed uniform multigraph $(V,E)$. We also fix an edge-labeling of $(V,E)$.
\begin{defn}\label{main_def_exp}
	The category of co-actions $\mathcal{C}_{(V,E)}$ consists of objects $(\mathcal{A},\Delta, \alpha)$ where $\alpha:C(V)\otimes C(X_N)\rightarrow C(V)\otimes C(X_N) \otimes \mathcal{A}$ is a co-action of a CQG $(\mathcal{A},\Delta)$ on the algebra $C(V)\otimes C(X_N)$ satisfying the following conditions:
	\begin{enumerate}
	\item $\alpha(C(V)\otimes 1)\subseteq C(V) \otimes1\otimes  \mathcal{A}$.
	\item $Ad_{\alpha}(\mathcal{\mathcal{E}})\subseteq \mathcal{E}\otimes \mathcal{A}$
	\end{enumerate}
where $\mathcal{E}\subseteq B(L^2(V))\otimes B(L^2(X_N))$ is a linear subspace defined by
\begin{equation*}
\mathcal{E}=linear\:span\:\{e_{ij}\otimes f_{rr}|\:E^i_j\neq\phi;1\leq r\leq |E^i_j|\}.
\end{equation*}
Morphisms in $\mathcal{C}_{(V,E)}$ are \textbf{quantum group homomorphisms} intertwining similar type co-actions.
\end{defn}
 Identifying $\mathcal{E}$ with $L^2(E)$ as vector spaces and $C(V)\otimes 1$ with $C(V)$ as algebras, condition (2) of definition \ref{main_def_exp} can be interpreted as $\alpha$ preserving the set of edges in $(V,E)$.
\begin{prop}\label{induced_perm_exp}
For $(\mathcal{A},\Delta, \alpha)\in \mathcal{C}_{(V,E)}$, $\alpha$ induces a \textbf{quantum permutation} $\alpha_V$ of the vertex set $V$ which is given by,
\begin{equation*}
	\alpha_V(\chi_i)=\sum_{k\in V} \chi_{k}\otimes Q^k_i
\end{equation*}
where $Q^k_i=\sum^N_{r=1}q^{ks}_{ir}$ for all $i,k\in V$ and $s\in\{1,..,N\}$. 
\end{prop}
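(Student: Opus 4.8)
The plan is to obtain $\alpha_V$ as the restriction of $\alpha$ to the $*$-subalgebra $C(V)\otimes 1$, using condition (1) of Definition \ref{main_def_exp} to manufacture the entries $Q^k_i$, and then to verify that $(Q^k_i)$ is a magic unitary. First I would compute $\alpha(\chi_i\otimes 1)$. Writing $1=\sum_{r=1}^N\chi_r$ in $C(X_N)$ and substituting the given expansion of $\alpha$ gives
\begin{equation*}
\alpha(\chi_i\otimes 1)=\sum_{r=1}^N\alpha(\chi_i\otimes\chi_r)=\sum_{k\in V}\sum_{s=1}^N\chi_k\otimes\chi_s\otimes\Big(\sum_{r=1}^N q^{ks}_{ir}\Big).
\end{equation*}
By condition (1) this element must lie in $C(V)\otimes 1\otimes\mathcal{A}$, that is, the coefficient of $\chi_k\otimes\chi_s$ cannot depend on $s$. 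Hence $\sum_{r=1}^N q^{ks}_{ir}$ is independent of $s$; I name it $Q^k_i$. Identifying $C(V)\otimes 1$ with $C(V)$ and $\chi_k\otimes 1$ with $\chi_k$, this is exactly the asserted formula $\alpha_V(\chi_i)=\sum_k\chi_k\otimes Q^k_i$. Extracting this independence of $s$ from condition (1) is the one genuinely structural step; everything afterwards is bookkeeping with the quantum permutation relations of $(q^{ks}_{ir})$.

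Since condition (1) makes $C(V)\otimes 1$ an $\alpha$-invariant $*$-subalgebra, the map $\alpha_V$ is literally the restriction of the co-action $\alpha$ to it, so it is a unital $*$-homomorphism and inherits coassociativity. It then remains to check that $(Q^k_i)$ is a magic unitary, which will upgrade $\alpha_V$ to a genuine quantum permutation. Each $Q^k_i=\sum_r q^{ks}_{ir}$ is self-adjoint as a sum of the projections $q^{ks}_{ir}$. Using that the entries in a fixed row $(k,s)$ of $(q^{ks}_{ir})$ are mutually orthogonal, i.e. $q^{ks}_{ir}q^{ks}_{ir'}=\delta_{rr'}\,q^{ks}_{ir}$, I obtain
\begin{equation*}
(Q^k_i)^2=\sum_{r,r'}q^{ks}_{ir}q^{ks}_{ir'}=\sum_r q^{ks}_{ir}=Q^k_i,
\end{equation*}
so each $Q^k_i$ is a projection. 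The row condition $\sum_i Q^k_i=\sum_{i,r}q^{ks}_{ir}=1$ is immediate from the row-sum relation of $(q^{ks}_{ir})$ for the fixed row $(k,s)$.

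For the column condition I would exploit the independence of $s$ once more, rewriting $Q^k_i=\tfrac{1}{N}\sum_{s=1}^N\sum_{r=1}^N q^{ks}_{ir}$ and computing
\begin{equation*}
\sum_{k\in V}Q^k_i=\frac{1}{N}\sum_{r=1}^N\Big(\sum_{k,s}q^{ks}_{ir}\Big)=\frac{1}{N}\sum_{r=1}^N 1=1,
\end{equation*}
using the column-sum relation of $(q^{ks}_{ir})$, in which the indices $k$ and $s$ are summed jointly. This averaging over $s$ is the only mildly subtle manoeuvre in the verification, precisely because the column-sum relation for the $q$'s couples $k$ and $s$ together and therefore does not apply to a single fixed $s$. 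With all four magic-unitary properties established, $(Q^k_i)$ is a quantum permutation matrix, and together with the inherited coassociativity this exhibits $\alpha_V$ as the induced quantum permutation of the vertex set $V$, completing the proof.
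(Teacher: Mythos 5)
Your proposal is correct and follows essentially the same route as the paper's proof: compute $\alpha(\chi_i\otimes 1)$, use condition (1) of Definition \ref{main_def_exp} to see that $\sum_{r=1}^N q^{ks}_{ir}$ is independent of $s$ (so the $Q^k_i$ are well defined), and then deduce the quantum permutation property of $(Q^k_i)$ from that of $(q^{ks}_{ir})$. The only difference is that the paper leaves the magic-unitary verification to the reader, whereas you carry it out explicitly — including the averaging-over-$s$ trick needed for the column sums, which is exactly the right way to handle the fact that the column relation for the $q$'s couples $k$ and $s$.
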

\begin{proof}
	For $i\in V$, we have
	\begin{equation*}
		\alpha(\chi_i\otimes 1)=\sum^N_{\substack{s=1\\k\in V}}\chi_k \otimes \chi_{s}\otimes  (\sum^N_{r=1}q^{ks}_{ir}) 
	\end{equation*}
	From (1) of definition \ref{main_def_exp}, it follows that,
	\begin{equation}\label{vertex_algebra_prsrv}
		\sum^N_{r=1}q^{ks}_{ir}=\sum^N_{r=1}q^{ks'}_{ir}\quad\text{for all}\quad s,s'\in\{1,..,N\}.
	\end{equation}
	Therefore, the quantities $\{Q^k_i\:|\:k,i\in V\}$ in proposition \ref{induced_perm_exp} are well-defined. As $(q^{ks}_{ir})_{(ks),(ir)}$ is a \textbf{quantum permutation matrix}, it follows that, the co-representation matrix $(Q^k_i)_{k,i\in V}$ is also \textbf{quantum permutation matrix} making $\alpha_V$ a co-action on $C(V)$.
\end{proof}
\begin{prop}\label{edge_presrv_exp}
	Let $(\mathcal{A},\Delta,\alpha)\in\mathcal{C}_{(V,E)}$ and $(q^{ks}_{ir})_{(ks),(ir)}$ be the co-representation matrix of $\alpha$. For $(i,j)r\in E; s,s'\in \{1,..,N\}$ and $k,l\in V$ the following conditions are true:
	\begin{enumerate}
		\item $q^{ks}_{ir} q^{ls'}_{jr}=0$ if $E^k_l=\phi$.
		\item $q^{ks}_{ir}q^{ls'}_{jr}=0$  if $E^k_l\neq\phi$ and $s\neq s'$.
	\end{enumerate}
\end{prop}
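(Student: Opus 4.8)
The plan is to read off both relations from a single computation of the induced adjoint action $Ad_\alpha$ on the matrix unit $e_{ij}\otimes f_{rr}$, combined with the edge-preserving containment $Ad_\alpha(\mathcal{E})\subseteq\mathcal{E}\otimes\mathcal{A}$ of Definition \ref{main_def_exp}(2) and the linear independence of the matrix units $\{e_{kl}\otimes f_{st}\}$.

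First I would regard $\alpha$ as the bi-unitary corepresentation $\alpha=\sum_{(k,s),(i,r)}(e_{ki}\otimes f_{sr})\otimes q^{ks}_{ir}$ on $L^2(V)\otimes L^2(X_N)$ and compute $Ad_\alpha(e_{ij}\otimes f_{rr})=\alpha(e_{ij}\otimes f_{rr}\otimes 1)\alpha^*$. Using $e_{kl}e_{i'j'}=\delta_{li'}e_{kj'}$ and $f_{st}f_{s't'}=\delta_{ts'}f_{st'}$, together with the fact that the entries of a quantum permutation matrix are projections (so $(q^{lt}_{jr})^*=q^{lt}_{jr}$), this expands to
\begin{equation*}
	Ad_\alpha(e_{ij}\otimes f_{rr})=\sum_{k,l\in V}\sum_{s,t=1}^N(e_{kl}\otimes f_{st})\otimes q^{ks}_{ir}\,q^{lt}_{jr}.
\end{equation*}
The only point requiring care in this step is tracking the operator order so that the coefficient of $e_{kl}\otimes f_{st}$ is exactly $q^{ks}_{ir}q^{lt}_{jr}$ (source factor first), matching the statement.

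Next, since $(i,j)r\in E$ we have $E^i_j\neq\phi$ and, by uniformity, $1\le r\le N=|E^i_j|$, so $e_{ij}\otimes f_{rr}\in\mathcal{E}$; condition (2) of Definition \ref{main_def_exp} then forces the left-hand side to lie in $\mathcal{E}\otimes\mathcal{A}$. As $\{e_{kl}\otimes f_{st}\}$ is a linearly independent family spanning $B(L^2(V))\otimes B(L^2(X_N))$, I can compare coefficients term by term: every coefficient $q^{ks}_{ir}q^{lt}_{jr}$ attached to a matrix unit $e_{kl}\otimes f_{st}$ that is \emph{not} in $\mathcal{E}$ must vanish. In the uniform case, $e_{kl}\otimes f_{st}\in\mathcal{E}$ precisely when $s=t$ and $E^k_l\neq\phi$ (the range condition $1\le s\le|E^k_l|$ being automatic since $|E^k_l|=N$ whenever $E^k_l\neq\phi$). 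Writing $t=s'$, this yields $q^{ks}_{ir}q^{ls'}_{jr}=0$ whenever $E^k_l=\phi$, which is (1), and also whenever $E^k_l\neq\phi$ but $s\neq s'$, which is (2).

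I do not anticipate a genuine obstacle: once the $Ad_\alpha$ expansion is organized correctly, both relations drop out of the single membership criterion for $\mathcal{E}$. The two things to handle carefully are the self-adjointness of the $q$'s (needed to obtain a product of two entries rather than an entry times an adjoint) and the appeal to uniformity, which is exactly what collapses the membership test for $\mathcal{E}$ to the clean condition ``$s=t$ and $E^k_l\neq\phi$'' and thereby separates the two cases of the proposition.
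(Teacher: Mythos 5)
Your proposal is correct and follows essentially the same route as the paper: both compute the expansion $Ad_\alpha(e_{ij}\otimes f_{rr})=\sum_{k,l,s,t}e_{kl}\otimes f_{st}\otimes q^{ks}_{ir}q^{lt}_{jr}$ (the paper does this by evaluating on basis vectors, you by direct matrix-unit multiplication, using self-adjointness of the entries in either case) and then read off both vanishing relations by comparing coefficients against the membership criterion for $\mathcal{E}$ given by condition (2) of Definition \ref{main_def_exp}. Your explicit remark that uniformity makes the range condition $1\le s\le |E^k_l|$ automatic is a point the paper leaves implicit, but the argument is the same.
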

\begin{proof}
For $i,j,i'\in V$ and $r,s,r'\in\{1,..,N\}$ we observe that,
\begin{align*}
Ad_{\alpha}(e_{ij}\otimes f_{rs})(\chi_{i'}\otimes \chi_{r'}\otimes 1)&=\alpha(e_{ij}\otimes f_{rs})(\sum^N_{\substack{r\mydprime=1\\i\mydprime\in V}}\chi_{i\mydprime}\otimes \chi_{r\mydprime}\otimes q_{i\mydprime r\mydprime}^{i'r'})\\
&=\alpha(\chi_{i}\otimes \chi_{r}\otimes q_{js}^{i'r'})\\
&=\sum^N_{\substack{r\mydprime=1\\i\mydprime\in V}}\chi_{i\mydprime}\otimes \chi_{r\mydprime}\otimes q^{i\mydprime r\mydprime}_{ir}q_{js}^{i'r'}.
\end{align*}
Therefore the map $Ad_{\alpha}:B(L^2(V))\otimes B(L^2(X_N))\rightarrow B(L^2(V))\otimes B(L^2(X_N))\otimes \mathcal{A}$ is given by,
\begin{equation}\label{Ad_alpha_expansion}
Ad_{\alpha}(e_{ij}\otimes f_{rs})=\sum^N_{\substack{r\mydprime,s'=1\\i\mydprime,j'\in V}} e_{i\mydprime j'}\otimes f_{r\mydprime s'}\otimes q^{i\mydprime r\mydprime}_{ir}q^{j' s'}_{js}.
\end{equation}
From condition (2) of definition \ref{main_def_exp} and equation \ref{Ad_alpha_expansion}, the proposition follows.
\end{proof}
It should be noted that the identities in equation \ref{vertex_algebra_prsrv} and the ones in proposition \ref{edge_presrv_exp} are complete set of algebraic relations satisfied by the coefficients of a co-representation matrix. In light of this, using standard techniques from theory of compact quantum groups (see for example, theorem 4.4.3 of \cite{hossain2023quantum}), we can prove the following result:
\begin{thm}
The category $\mathcal{C}_{(V,E)}$ admits a universal object namely $(Q_{(V,E)},\Delta,\alpha)$. The algebra $Q_{(V,E)}$ is the \textbf{universal C* algebra} generated by the set of elements $\{q^{ks}_{ir}\:|\:k,i\in V;s,r=1,..,N\}$ satisfying the following conditions:
\begin{enumerate}
\item Coefficients of the matrix $(q^{ks}_{ir})_{(ks),(ir)}$ satisfy \textbf{quantum permutation relations}.
\item $q^{ks}_{ir} q^{ls'}_{jr}=0$ if $E^k_l=\phi$ and $(i,j)r\in E$.
\item $q^{ks}_{ir}q^{ls'}_{jr}=0$  if $E^k_l\neq\phi$, $s\neq s'$ and $(i,j)r\in E$.
\end{enumerate} 
Moreover, the co-product $\Delta$ on $Q_{(V,E)}$ is given by $$\Delta(q^{ks}_{ir})=\sum^N_{\substack{s'=1\\k'\in V}}q^{ks}_{k's'}\otimes q^{k's'}_{ir}\quad\text{where}\quad k,i\in V; s,r=1,..,N.$$
The canonical co-action of $Q_{(V,E)}$ on $C(V)\otimes C(X_N)$ is given by,
\begin{equation*}
	\alpha(\chi_i\otimes \chi_r)=\sum^N_{\substack{s=1\\k\in V}}\chi_k\otimes \chi_s\otimes q^{ks}_{ir}\quad\text{for all}\quad i\in V; r=1,..,N.
\end{equation*}
\end{thm}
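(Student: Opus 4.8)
The statement is a universal-object result, so the plan is to construct $Q_{(V,E)}$ by the standard enveloping-C*-algebra procedure and then verify the compact quantum group axioms together with the universal property, the only genuinely new input being the edge-orthogonality relations (2) and (3). First I would define $Q_{(V,E)}$ to be the universal unital C*-algebra generated by symbols $q^{ks}_{ir}$ subject to relations (1)--(3). Existence is not an issue: relation (1) forces each generator to be a projection, hence of norm at most $1$, so the universal C*-seminorm on the $*$-algebra presented by these generators and relations is finite and its completion $Q_{(V,E)}$ exists.

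Next I would produce the Hopf structure. I define $\Delta$ by the stated formula, the counit by $\epsilon(q^{ks}_{ir})=\delta_{ki}\delta_{sr}$, and the antipode by $\kappa(q^{ks}_{ir})=q^{ir}_{ks}$ on generators, extending each by universality. For every one of these maps the task reduces to checking that the images of the generators again satisfy relations (1)--(3); coassociativity and the counit and antipode identities then follow by evaluation on generators. Relation (1) is preserved by $\Delta$ by the familiar magic-unitary computation, and $\epsilon,\kappa$ respect (1)--(3) by short index checks (for instance, under the hypotheses of (2) one cannot have $k=i$ and $l=j$ simultaneously, since that would force $E^k_l=E^i_j\neq\phi$). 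Because the fundamental corepresentation $(q^{ks}_{ir})$ is a magic unitary, its matrix is unitary with unitary transpose, so Woronowicz's density/cancellation conditions hold and $(Q_{(V,E)},\Delta)$ is a CQG; this is exactly where I invoke the standard machinery (theorem 4.4.3 of \cite{hossain2023quantum}).

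To see that the canonical $\alpha$ belongs to $\mathcal{C}_{(V,E)}$ I would verify the two conditions of Definition \ref{main_def_exp} directly: condition (2) is precisely relations (2)--(3) read off from the expansion \ref{Ad_alpha_expansion} of $Ad_\alpha$, while condition (1) is equation \ref{vertex_algebra_prsrv}, which holds among the coefficients as recorded in the remark preceding the statement. Conversely, for universality I would begin with an arbitrary object $(\mathcal{A},\Delta_{\mathcal A},\beta)\in\mathcal{C}_{(V,E)}$ with coefficients $\tilde q^{ks}_{ir}$; since $\beta$ is a co-action these form a magic unitary, and by Proposition \ref{edge_presrv_exp} together with equation \ref{vertex_algebra_prsrv} they satisfy relations (2)--(3). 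The universal property then yields a unique $*$-homomorphism $\pi:Q_{(V,E)}\to\mathcal{A}$ with $\pi(q^{ks}_{ir})=\tilde q^{ks}_{ir}$; one checks that $\pi$ intertwines the coproducts and the co-actions, and uniqueness is automatic since the $q^{ks}_{ir}$ generate $Q_{(V,E)}$.

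I expect the main obstacle to be the well-definedness of $\Delta$, specifically that relations (2) and (3) survive it. Unlike relation (1), these must be established by hand: expanding $\Delta(q^{ks}_{ir})\,\Delta(q^{ls'}_{jr})=\sum_{k',l',t,u} q^{ks}_{k't}\,q^{ls'}_{l'u}\otimes q^{k't}_{ir}\,q^{l'u}_{jr}$, one first uses relations (2)--(3) in the right tensor leg (legitimate because $(i,j)r\in E$) to discard every term except those with $E^{k'}_{l'}\neq\phi$ and $t=u$, and then applies relation (2), respectively (3), in the left tensor leg to annihilate the survivors. The uniformity hypothesis is essential here: $E^{k'}_{l'}\neq\phi$ forces $(k',l')t\in E$, which is exactly what licenses the left-leg relation. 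Carrying out this two-sided cancellation cleanly, while keeping the index bookkeeping straight, is the crux of the argument.
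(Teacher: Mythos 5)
Your overall strategy---build $Q_{(V,E)}$ as a universal C*-algebra, check that the defining relations survive the matrix coproduct, obtain the CQG structure from unitarity of the magic fundamental corepresentation, then verify membership in $\mathcal{C}_{(V,E)}$ and universality---is exactly the ``standard technique'' the paper invokes, and your two-sided cancellation argument for the $\Delta$-invariance of relations (2)--(3) (in particular the observation that uniformity is what guarantees $(k',l')t\in E$ whenever $E^{k'}_{l'}\neq\phi$, so that the left-leg relations apply) is correct and is precisely the detail the paper leaves implicit. The gap is in the step where you check condition (1) of definition \ref{main_def_exp} for the canonical $\alpha$: you assert that equation \ref{vertex_algebra_prsrv} ``holds among the coefficients as recorded in the remark preceding the statement.'' That is circular. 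Equation \ref{vertex_algebra_prsrv} is derived in proposition \ref{induced_perm_exp} for co-actions that \emph{already} satisfy condition (1); for the abstract algebra you presented by relations (1)--(3) alone it is neither a hypothesis nor a consequence. The remark preceding the theorem actually lists the identities of equation \ref{vertex_algebra_prsrv} as part of the ``complete set of algebraic relations,'' i.e.\ as \emph{additional defining relations} of $Q_{(V,E)}$, not as something implied by (2)--(3).

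The implication genuinely fails, so this cannot be patched by a cleverer derivation. Take $V=\{a,b,c\}$ with two edges from $a$ to $b$, two edges from $a$ to $c$, and no others ($N=2$; this multigraph is uniform). In any unital C*-algebra containing projections $p_1\neq p_2$, set $q^{as}_{ar}=\delta_{sr}$, $q^{as}_{jr}=q^{js}_{ar}=0$ for $j\in\{b,c\}$, and for $k,i\in\{b,c\}$ put $q^{ks}_{ir}=0$ when $s\neq r$, while $q^{b1}_{b1}=q^{c1}_{c1}=p_1$, $q^{b1}_{c1}=q^{c1}_{b1}=1-p_1$, $q^{b2}_{b2}=q^{c2}_{c2}=p_2$, $q^{b2}_{c2}=q^{c2}_{b2}=1-p_2$. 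This is a quantum permutation matrix, and every instance of relations (2) and (3) holds: the only nonzero entry of a column $(a,r)$ sits in row $(a,r)$, and the relevant products with it vanish by construction. Yet $\sum_r q^{b1}_{br}=p_1\neq p_2=\sum_r q^{b2}_{br}$, so equation \ref{vertex_algebra_prsrv} fails in this model. Consequently the algebra you constructed surjects onto this one, the elements $\sum_r q^{ks}_{ir}$ in it do depend on $s$, its canonical $\alpha$ does not map $C(V)\otimes 1$ into $C(V)\otimes1\otimes Q_{(V,E)}$, and the triple you build is not even an object of $\mathcal{C}_{(V,E)}$, let alone universal in it. The repair is to add the identities $\sum^N_{r=1} q^{ks}_{ir}=\sum^N_{r=1} q^{ks'}_{ir}$ to the presentation, as the paper's remark intends; your argument then goes through after two further easy checks: these identities hold for the coefficients of any object of the category (proposition \ref{induced_perm_exp}), and they are preserved by $\Delta$ (sum the coproduct formula over $r$ and use the identity in each tensor leg).
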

\section{Action of $Q_{(V,E)}$ on $(V,E)$}\label{Q_(V,E)-action}

 In this section, we see that the CQG $Q_{(V,E)}$ defined in previously has a \textbf{non-Bichon} type co-action on the uniform multigraph $(V,E)$ \textbf{preserving its quantum symmetry in our sense}.
\begin{prop}\label{bimod_exp}
	There exists a bi-unitary co-representation $\beta:L^2(E)\rightarrow L^2(E)\otimes Q_{(V,E)}$  of the CQG $(Q_{(V,E)},\Delta)$ such that the pair $(\alpha_V,\beta)$ respects $C(V)-C(V)$ bimodule structure on $L^2(E)$, that is, for all $i\in V$ and $\sigma\in E$,
	\begin{equation}
		\beta(\chi_i.\chi_{\sigma})=\alpha_V(\chi_i)\beta(\chi_\sigma)\quad\text{and}\quad\beta(\chi_\sigma.\chi_i)=\beta(\chi_\sigma)\alpha_V(\chi_i).
	\end{equation} 
	For a member of $\mathcal{C}_{(V,E)}$, the co-action $\alpha_V:C(V)\rightarrow C(V)\otimes Q_{(V,E)}$ is described in proposition \ref{induced_perm_exp}.
 \end{prop}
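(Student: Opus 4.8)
The plan is to exhibit $\beta$ as nothing more than the restriction of the adjoint co-action $Ad_{\alpha}$ to the invariant subspace $\mathcal{E}$, transported to $L^2(E)$ through the identification $e_{ij}\otimes f_{rr}\leftrightarrow\chi_{(i,j)r}$. Once this is set up, the co-representation and unitarity statements will be inherited essentially for free, and the only genuine computation will be the bimodule compatibility. First I would write $\beta$ explicitly: specializing equation \ref{Ad_alpha_expansion} to $s=r$ and retaining only the terms allowed by condition (2) of Definition \ref{main_def_exp} — equivalently, discarding the terms killed by Proposition \ref{edge_presrv_exp} — and then using uniformity (so that for every edge $(k,l)p$ the label $p$ runs over all of $1,\dots,N$), one obtains
\[
\beta(\chi_{(i,j)r})=\sum_{(k,l)p\in E}\chi_{(k,l)p}\otimes q^{kp}_{ir}q^{lp}_{jr}.
\]
That this lands in $L^2(E)\otimes Q_{(V,E)}$ is exactly the content of $Ad_{\alpha}(\mathcal{E})\subseteq\mathcal{E}\otimes Q_{(V,E)}$, valid for the universal object. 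I would also record that $\{e_{ij}\otimes f_{rr}\}$ is orthonormal for the Hilbert--Schmidt inner product, so the identification $\mathcal{E}\cong L^2(E)$ is unitary and no inner-product bookkeeping is lost.

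For the co-representation property and bi-unitarity I would argue abstractly rather than compute. Since $\alpha$ is a magic-unitary co-representation of $(Q_{(V,E)},\Delta)$ — its matrix $(q^{ks}_{ir})$ has self-adjoint projection entries, hence equals its own entrywise adjoint and so is bi-unitary — the induced $Ad_{\alpha}$ on $B(L^2(V))\otimes B(L^2(X_N))$ with the Hilbert--Schmidt structure is unitarily equivalent to $\alpha\otimes\bar\alpha$ and is again bi-unitary. As $\mathcal{E}$ is $Ad_{\alpha}$-invariant, its orthogonal complement is invariant too, so the restriction $\beta=Ad_{\alpha}|_{\mathcal{E}}$ is a bi-unitary co-representation. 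Alternatively one may check $(\mathrm{id}\otimes\Delta)\beta=(\beta\otimes\mathrm{id})\beta$ by hand: expanding $\Delta(q^{kp}_{ir}q^{lp}_{jr})$ through the homomorphism property and the co-product formula, the resulting double sum over intermediate indices $(k_1s_1),(k_2s_2)$ collapses to a single sum over edges precisely because Proposition \ref{edge_presrv_exp} forces $s_1=s_2$ and $E^{k_1}_{k_2}\neq\phi$.

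The main work, and the step I expect to be the only real obstacle, is the $C(V)$--$C(V)$ bimodule compatibility. Using the module conventions of \cite{Goswami2023} (the left and right actions of $\chi_i$ picking out the edges with source, respectively target, equal to $i$), I would expand both sides on the edge basis and reduce everything to the two identities
\[
Q^{k}_{i}\,q^{kp}_{ar}=\delta_{ia}\,q^{kp}_{ar},\qquad q^{lp}_{br}\,Q^{l}_{i}=\delta_{ib}\,q^{lp}_{br},
\]
which follow from $Q^{k}_{i}=\sum_{m}q^{kp}_{im}$ (independent of the upper label by equation \ref{vertex_algebra_prsrv}) together with the row orthogonality $q^{kp}_{im}q^{kp}_{ar}=\delta_{(i,m),(a,r)}q^{kp}_{ar}$ of the quantum permutation matrix. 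Substituting the first into $\alpha_V(\chi_i)\beta(\chi_\sigma)$ collapses the vertex sum to $\delta_{i,s(\sigma)}\beta(\chi_\sigma)=\beta(\chi_i.\chi_\sigma)$, and substituting the second into $\beta(\chi_\sigma)\alpha_V(\chi_i)$ yields $\delta_{i,t(\sigma)}\beta(\chi_\sigma)=\beta(\chi_\sigma.\chi_i)$. The one point to watch is the placement of $Q^{k}_{i}$ on the correct side of each product — on the left for the source relation and on the right for the target relation — so that the orthogonality relation applies to the adjacent factor; with that observed, the remaining manipulations are routine.
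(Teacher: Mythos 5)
Your proposal is correct, and two of its three components coincide with the paper's own proof. The construction of $\beta$ is identical (restriction of $Ad_{\alpha}$ to $\mathcal{E}$, transported through $\chi_{(i,j)r}\leftrightarrow e_{ij}\otimes f_{rr}$, yielding the same explicit formula), and your bimodule verification is, after renaming indices, exactly the paper's: your two identities are the paper's
\begin{equation*}
Q^{i}_{k'}u^{(i,j)r}_{(k,l)s}=\delta_{k,k'}u^{(i,j)r}_{(k,l)s},\qquad u^{(i,j)r}_{(k,l)s}Q^{j}_{l'}=\delta_{l,l'}u^{(i,j)r}_{(k,l)s},
\end{equation*}
proved by the same collapse of $Q^{i}_{k'}=\sum_{s'}q^{ir}_{k's'}$ against the adjacent factor via row orthogonality of the quantum permutation matrix. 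Where you genuinely diverge is in establishing that $\beta$ is a bi-unitary co-representation. The paper does this by direct computation: it checks unitarity of $U=(u^{\sigma}_{\tau})$ and of the contragredient matrix by explicit sums (the key maneuver being to enlarge sums over edges to sums over all of $V\times X_{N}$, justified by the vanishing relations of proposition \ref{edge_presrv_exp}), and then verifies the matrix co-product identity by hand. You instead argue abstractly: $Ad_{\alpha}\cong\alpha\otimes\overline{\alpha}$ under the Hilbert--Schmidt identification, $\overline{\alpha}=\alpha$ because the entries are self-adjoint projections, hence $Ad_{\alpha}$ is bi-unitary, and its restriction to the invariant subspace $\mathcal{E}$ is again bi-unitary since the orthogonal complement of an invariant subspace is invariant. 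This is legitimate, but one step deserves an explicit citation rather than a passing mention: orthocomplement invariance is a compact-quantum-group fact, not a C*-algebra fact. In $B(L^2(V))\otimes B(L^2(X_N))\otimes Q_{(V,E)}$ an isometric block ($a^*a=1$) need not be co-isometric ($aa^*=1$), so ``block upper-triangular unitary implies block diagonal'' is not automatic; it rests on the Haar state (equivalently, complete reducibility of finite-dimensional unitary co-representations). With that reference supplied, your route is shorter and more conceptual, and your fallback hands-on check of the co-product identity is exactly the paper's computation; the paper's longer verification has the virtue of staying entirely at the level of the generators $q^{ks}_{ir}$ and the defining relations of $Q_{(V,E)}$, in the spirit of the rest of the article.
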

\begin{proof}
We start with identifying $L^2(E)$ with $\mathcal{E}$ (see definition \ref{main_def_exp}) as vector spaces through the following identification of basis elements:
\begin{equation*}
	\chi_{(i,j)r} \longleftrightarrow e_{ij}\otimes f_{rr} \quad \text{for all}\quad (i,j)r\in E.
\end{equation*}
As $Ad_{\alpha}$ on $B(L^2(V))\otimes B(L^2(X_N))$  preserves $\mathcal{E}$ by definition \ref{main_def_exp}, using the identification mentioned above it induces a map $\beta:L^2(E)\rightarrow L^2(E)\otimes\mathcal{A}$ which can be written as (using equation \ref{Ad_alpha_expansion}),
\begin{equation*}
	\beta(\chi_{(i,j)r})=\sum_{(k,l)s\in E}\chi_{(k,l)s}\otimes q^{ks}_{ir}q^{ls}_{jr}\quad\text{for all} \quad (i,j)r\in E.
\end{equation*}  
By taking $u^{(k,l)s}_{(i,j)r}=q^{ks}_{ir}q^{ls}_{jr}$ for $(i,j)r, (k,l)s\in E$, it follows that the matrix $U=(u^{\sigma}_{\tau})_{\sigma,\tau\in E}$ is the co-representation matrix of $\beta$. To conclude that $\beta$ is a bi-unitary co-representation, it is enough to show that the matrix $U$ is a bi-unitary matrix and satisfies matrix co-product identity, that is, $\Delta(u^{\sigma}_{\tau})=\sum_{\tau'\in E}u^{\sigma}_{\tau'}\otimes u^{\tau'}_{\tau}$ for all $\sigma,\tau\in E$. For $(k,l)s, (k',l')s'\in E$, using proposition \ref{edge_presrv_exp}, we observe the following:
\begin{align*}
\sum_{(i,j)r\in E}u^{(k,l)s}_{(i,j)r}u^{(k',l')s'*}_{(i,j)r}&=\delta_{l,l'}\delta_{s,s'}\sum_{(i,j)r\in E}q^{ks}_{ir}q^{ls}_{jr}q^{k's}_{ir}=\delta_{l,l'}\delta_{s,s'}\sum^N_{\substack{r,r'=1\\i,j\in V}}q^{ks}_{ir}q^{ls}_{jr'}q^{k's}_{ir}=\delta_{k,k'}\delta_{l,l'}\delta_{s,s'},\\
\sum_{(i,j)r\in E}u^{(i,j)r*}_{(k,l)s}u^{(i,j)r}_{(k',l')s'}&=\delta_{k,k'}\delta_{s,s'}\sum_{(i,j)r\in E}q^{jr}_{ls}q^{ir}_{k's}q^{jr}_{l's}=\delta_{k,k'}\delta_{s,s'}\sum^N_{\substack{r,r'=1\\i,j\in V}}q^{jr}_{ls}q^{ir'}_{k's}q^{jr}_{l's}=\delta_{k,k'}\delta_{l,l'}\delta_{s,s'}.
\end{align*}
Similarly, it also follows that the contragradient co-representation $\overline{\beta}$ (with co-representation matrix $(u^{\sigma*}_\tau)_{\sigma,\tau\in E}$) is also unitary. The matrix co-product identity holds:
\begin{align*}
\Delta(u^{(i,j)r}_{(k,l)s})=\Delta(q^{ir}_{ks}q^{jr}_{ls})&=\sum^N_{\substack{s',s\mydprime=1\\k'l'\in V}}q^{ir}_{k's'}q^{jr}_{l's\mydprime}\otimes q^{k's'}_{ks}q^{l's\mydprime}_{ls}\\
&=\sum_{(k'l')s'\in E} q^{ir}_{k's'}q^{jr}_{l's'}\otimes q^{k's'}_{ks}q^{l's'}_{ls}\\
&=\sum_{(k',l')s'\in E}u^{(i,j)r}_{(k'l')s'}\otimes u^{(k',l')s'}_{(k,l)s}.
\end{align*}
For $k',l'\in V$ and $(i,j)r, (k,l)s\in E$, we observe that,
\begin{align*}
Q^{i}_{k'}u^{(i,j)r}_{(k,l)s}&=\sum^N_{s'=1}q^{ir}_{k's'}q^{ir}_{ks}q^{jr}_{ls}=\delta_{k,k'}q^{ir}_{ks}q^{jr}_{ls}=\delta_{k,k'}u^{(i,j)r}_{(k,l)s},\\
u^{(i,j)r}_{(k,l)s}Q^{j}_{l'}&=\sum^N_{s'=1}q^{ir}_{ks}q^{jr}_{ls}q^{js}_{l's'}=\delta_{l,l'}q^{ir}_{ks}q^{jr}_{ls}=\delta_{l,l'}u^{(i,j)r}_{(k,l)s}.
\end{align*}
The observations made above indicates that the pair $(\alpha_V,\beta)$ preserves $C(V)-L^2(E)-C(V)$ bimodule structure. Therefore proposition \ref{edge_presrv_exp} is proved.
\end{proof}
From \cite{Goswami2023} we have the following: Any bi-unitary co-representation $\beta:L^2(E)\rightarrow L^2(E)\otimes \mathcal{A}$ of a CQG $(\mathcal{A},\Delta)$ preserves \textbf{quantum symmetry of $(V,E)$ in Banica's sense} if the following holds:
\begin{enumerate}
    \item There exists a co-action $\alpha:C(V)\rightarrow C(V)\otimes \mathcal{A}$ such that the pair $(\alpha,\beta)$ respects $C(V)-L^2(E)-C(V)$ bimodule structure.
    \item $\beta(\xi)=\xi\otimes 1$ where $\xi=\sum_{i\in V}\chi_i$.
\end{enumerate}
In our case, the bi-unitary map $\beta$ in proposition \ref{bimod_exp} is a co-action of $Q_{(V,E)}$ on the multigraph $(V,E)$ \textbf{preserving its quantum symmetry in Banica's sense}. The next proposition proves that it is in fact a member of $\mathcal{C}^{sym}_{(V,E)}$, that is, it satisfies \textbf{restricted orthogonality} (see definition 3.17 of \cite{Goswami2023}).
\begin{prop}
	The bi-unitary co-representation $\beta$ of $Q_{(V,E)}$ on $L^2(E)$ mentioned in proposition \ref{bimod_exp} preserves \textbf{quantum symmetry of $(V,E)$ in our sense.} 
\end{prop}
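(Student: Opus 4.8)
The preceding paragraph already records that $\beta$ preserves the quantum symmetry of $(V,E)$ in Banica's sense, so the single remaining requirement for membership in $\mathcal{C}^{sym}_{(V,E)}$ is the \emph{restricted orthogonality} condition of Definition 3.17 of \cite{Goswami2023}. My plan is therefore to first transcribe that condition into a set of relations among the coefficients of the co-representation matrix $U=(u^\sigma_\tau)_{\sigma,\tau\in E}$, using the explicit factorisation $u^{(k,l)s}_{(i,j)r}=q^{ks}_{ir}q^{ls}_{jr}$ established in Proposition \ref{bimod_exp}, and then to check each such relation directly on the generators $q^{ks}_{ir}$ of $Q_{(V,E)}$.

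The verification should run almost verbatim along the two displayed computations in the proof of Proposition \ref{bimod_exp}. The two decisive tools are the vanishing relations of Proposition \ref{edge_presrv_exp}, namely $q^{ks}_{ir}q^{ls'}_{jr}=0$ when $E^k_l=\phi$ (part (1)) and $q^{ks}_{ir}q^{ls'}_{jr}=0$ when $E^k_l\neq\phi$ and $s\neq s'$ (part (2)), together with the quantum permutation relations for $(q^{ks}_{ir})$: each entry is a projection, and within a common row or a common column the entries are mutually orthogonal and sum to $1$. Substituting the factorised form of $u^\sigma_\tau$, a typical restricted-orthogonality sum becomes a sum of words of the shape $q^{ks}_{ir}q^{ls}_{jr}q^{k's'}_{i'r'}q^{l's'}_{j'r'}$, and the strategy is exactly the one used in Proposition \ref{bimod_exp}: first apply column orthogonality (or part (2)) to force the label indices to agree, then collapse the remaining projections using orthogonality within a row or a column, so that every off-diagonal contribution cancels and only the intended Kronecker-delta terms survive.

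I expect the only real obstacle to be bookkeeping rather than anything conceptual, since the relations defining $Q_{(V,E)}$ are precisely the edge-preservation relations that restricted orthogonality demands. The crux is to separate the computation cleanly into the cases $E^k_l=\phi$, $E^k_l\neq\phi$ with mismatched labels, and $E^k_l\neq\phi$ with matched labels, and in each case invoke exactly the right relation so that the mismatched terms vanish by Proposition \ref{edge_presrv_exp} while the matched terms reduce to a single surviving projection. I would also point out explicitly where uniformity of $(V,E)$ is used: because every non-empty bundle $E^i_j$ has exactly $N$ edges, the restricted sums close up consistently and the passage between the edge-constrained sums $\sum_{(i,j)r\in E}$ and the free sums $\sum_{r,i,j}$ (as in Proposition \ref{bimod_exp}) is legitimate. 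Once all the required relations are verified, $\beta$ lies in $\mathcal{C}^{sym}_{(V,E)}$, which is the assertion of the proposition.
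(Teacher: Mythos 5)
There is a genuine gap: you never pin down what \emph{restricted orthogonality} actually says, and the condition you sketch a verification of is not it. Restricted orthogonality (proposition 3.18 of \cite{Goswami2023}, the form this paper invokes) is a \emph{pairwise} condition on coefficients attached to distinct edges of the same bundle: for $(i,j)r,\,(i,j)r',\,(k,l)s\in E$ with $r\neq r'$ one must show $u^{(i,j)r}_{(k,l)s}u^{(i,j)r'*}_{(k,l)s}=0$ and $u^{(i,j)r*}_{(k,l)s}u^{(i,j)r'}_{(k,l)s}=0$. There are no sums over edges, no Kronecker deltas to recover, no case analysis on whether $E^k_l=\phi$, and no use of uniformity anywhere. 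What you describe instead --- sums of words $q^{ks}_{ir}q^{ls}_{jr}q^{k's'}_{i'r'}q^{l's'}_{j'r'}$ over edge-constrained index sets collapsing to $\delta$-terms, with Proposition \ref{edge_presrv_exp} and uniformity invoked to pass between restricted and free sums --- is precisely the bi-unitarity computation already carried out in Proposition \ref{bimod_exp}; repeating it proves nothing new, since Banica-type preservation (which subsumes bi-unitarity) is already granted in the paragraph you quote at the outset.

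The actual verification is two lines and uses only the quantum permutation relations, not Proposition \ref{edge_presrv_exp}: writing $u^{(i,j)r}_{(k,l)s}=q^{ir}_{ks}q^{jr}_{ls}$ and using that the $q$'s are self-adjoint projections,
\begin{equation*}
u^{(i,j)r}_{(k,l)s}u^{(i,j)r'*}_{(k,l)s}=q^{ir}_{ks}\bigl(q^{jr}_{ls}q^{jr'}_{ls}\bigr)q^{ir'}_{ks}=0,
\qquad
u^{(i,j)r*}_{(k,l)s}u^{(i,j)r'}_{(k,l)s}=q^{jr}_{ls}\bigl(q^{ir}_{ks}q^{ir'}_{ks}\bigr)q^{jr'}_{ls}=0,
\end{equation*}
because $q^{jr}_{ls}$ and $q^{jr'}_{ls}$ (respectively $q^{ir}_{ks}$ and $q^{ir'}_{ks}$) are distinct entries of a common column of the magic unitary $(q^{ks}_{ir})$, hence mutually orthogonal when $r\neq r'$. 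Your toolkit does contain this fact --- you explicitly mention row and column orthogonality --- so the gap is not that you lack the means, but that your plan points them at the wrong condition: as written, the proposal would produce a redundant re-proof of unitarity rather than a proof that $\beta$ belongs to $\mathcal{C}^{sym}_{(V,E)}$.
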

\begin{proof}
The map $\beta:L^2(E)\rightarrow L^2(E)\otimes Q_{(V,E)}$ is given by
\begin{equation*}
\beta(\chi_{(i,j)r})=\sum_{(k,l)s\in E}\chi_{(k,l)s}\otimes q^{ks}_{ir}q^{ls}_{jr}\quad\text{where}\quad (i,j)r\in E.
\end{equation*}
To show that the coefficients of co-representation matrix of $\beta$ satisfies \textbf{restricted orthogonality} (see proposition 3.18 of \cite{Goswami2023}), we observe the following: For $(i,j)r$, $(i,j)r', (k,l)s\in E$ and $r\neq r'$,
\begin{align*}
	u^{(i,j)r}_{(k,l)s}u^{(i,j)r'*}_{(k,l)s}&=q^{ir}_{ks}(q^{jr}_{ls}q^{jr'}_{ls})q^{ir'}_{ks}=0,\\
	u^{(i,j)r*}_{(k,l)s}u^{(i,j)r'}_{(k,l)s}&=q^{jr}_{ls}(q^{ir}_{ks}q^{ir'}_{ks})q^{jr'}_{ls}=0.
\end{align*} 
Therefore $\beta$ is a co-action of $Q_{(V,E)}$ on $(V,E)$ preserving its \textbf{quantum symmetry in our sense}. 
\end{proof}
\begin{prop}\label{non_Bichon_type_action}
	The co-action $\beta$ of $Q_{(V,E)}$ on the multigraph $(V,E)$ is an action of \textbf{non-Bichon} type, that is, the triplet $(Q_{(V,E)},\Delta,\beta) \notin \mathcal{C}^{Bic}_{(V,E)}$ whenever $\mathcal{C}^{Bic}_{(V,E)}\neq \mathcal{C}^{sym}_{(V,E)}$.
\end{prop}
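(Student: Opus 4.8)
The plan is to argue by contraposition: assuming the triplet $(Q_{(V,E)},\Delta,\beta)$ \emph{does} lie in $\mathcal{C}^{Bic}_{(V,E)}$, I will deduce that $\mathcal{C}^{Bic}_{(V,E)}=\mathcal{C}^{sym}_{(V,E)}$, which is exactly the negation of the hypothesis. The structural fact driving this is the behaviour of the universal object of $\mathcal{C}^{sym}_{(V,E)}$, call it $\mathcal{Q}^{sym}_{(V,E)}$, together with the inclusion $\mathcal{C}^{Bic}_{(V,E)}\subseteq\mathcal{C}^{sym}_{(V,E)}$ recorded in \cite{Goswami2023}. Since $\mathcal{C}^{Bic}_{(V,E)}$ is cut out inside $\mathcal{C}^{sym}_{(V,E)}$ by additional polynomial relations on the entries of the edge co-representation (definition 3.7 of \cite{Goswami2023}), those relations descend to every quotient quantum group; hence $\mathcal{C}^{Bic}_{(V,E)}$ is closed under the formation of quotients intertwining the edge co-action. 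Consequently $\mathcal{C}^{Bic}_{(V,E)}=\mathcal{C}^{sym}_{(V,E)}$ holds if and only if the universal object $\mathcal{Q}^{sym}_{(V,E)}$ itself belongs to $\mathcal{C}^{Bic}_{(V,E)}$: if it does, every object of $\mathcal{C}^{sym}_{(V,E)}$, being a quotient of $\mathcal{Q}^{sym}_{(V,E)}$, automatically satisfies the Bichon relations.

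The first genuine step is to identify the object produced here with this universal object. By proposition \ref{bimod_exp} and the proposition following it, the pair $(\alpha_V,\beta)$ is an object of $\mathcal{C}^{sym}_{(V,E)}$, so universality yields a surjective morphism $\phi:\mathcal{Q}^{sym}_{(V,E)}\to Q_{(V,E)}$ carrying the universal edge-entries to $u^{\sigma}_{\tau}=q^{ks}_{ir}q^{ls}_{jr}$. In the uniform case, which is the standing assumption of this section, the co-action $\beta$ is faithful, so the elements $u^{\sigma}_{\tau}$ generate $Q_{(V,E)}$ and $\phi$ is onto; what I must upgrade is that $\phi$ is in fact an isomorphism. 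Equivalently, I must verify that the generators $u^{\sigma}_{\tau}=q^{ks}_{ir}q^{ls}_{jr}$ of $Q_{(V,E)}$ satisfy no relation beyond the defining relations of $\mathcal{C}^{sym}_{(V,E)}$, so that the inverse assignment extends to a morphism; this is where the quantum-permutation bookkeeping of proposition \ref{edge_presrv_exp} and the co-product formula are used, realizing $(Q_{(V,E)},\beta)$ as the universal object of $\mathcal{C}^{sym}_{(V,E)}$.

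Granting this identification, the argument closes quickly. If $(Q_{(V,E)},\Delta,\beta)\in\mathcal{C}^{Bic}_{(V,E)}$, then the universal object $\mathcal{Q}^{sym}_{(V,E)}\cong Q_{(V,E)}$ lies in $\mathcal{C}^{Bic}_{(V,E)}$, and by the quotient-closure noted above every object of $\mathcal{C}^{sym}_{(V,E)}$ lies in $\mathcal{C}^{Bic}_{(V,E)}$; together with the inclusion $\mathcal{C}^{Bic}_{(V,E)}\subseteq\mathcal{C}^{sym}_{(V,E)}$ this forces $\mathcal{C}^{Bic}_{(V,E)}=\mathcal{C}^{sym}_{(V,E)}$, contrary to the hypothesis. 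Therefore $(Q_{(V,E)},\Delta,\beta)\notin\mathcal{C}^{Bic}_{(V,E)}$ whenever the two categories differ.

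I expect the main obstacle to be precisely the isomorphism $\phi:\mathcal{Q}^{sym}_{(V,E)}\to Q_{(V,E)}$ of the second paragraph: a surjection onto the universal object is automatic, but its injectivity — the assertion that the products $q^{ks}_{ir}q^{ls}_{jr}$ obey only restricted orthogonality and the bimodule relations and nothing more — is the delicate point, and it is exactly here that uniformity, hence faithfulness of $\beta$, is indispensable. If one wishes to bypass this identification, the alternative is to pin down, in terms of the multiplicity data $|E^i_j|$, the combinatorial condition under which the entries $u^{\sigma}_{\tau}$ form a genuine magic unitary on $E$ (the Bichon relations), and to check that the very same condition characterizes $\mathcal{C}^{Bic}_{(V,E)}=\mathcal{C}^{sym}_{(V,E)}$; the universal-object route above, however, is cleaner and avoids a case analysis.
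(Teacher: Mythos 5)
Your outer framework is fine as far as it goes: Bichon-type membership is cut out by relations on the edge entries, so it passes to quotients, and hence (granting that $\mathcal{C}^{sym}_{(V,E)}$ admits a universal object $\mathcal{Q}^{sym}_{(V,E)}$, which your argument also presupposes) $\mathcal{C}^{Bic}_{(V,E)}=\mathcal{C}^{sym}_{(V,E)}$ if and only if $\mathcal{Q}^{sym}_{(V,E)}$ is itself of Bichon type. The fatal problem is the step you yourself flag as ``the delicate point'' and never carry out: the identification $Q_{(V,E)}\cong\mathcal{Q}^{sym}_{(V,E)}$. This is not just unproven --- it is false in general, including in cases where the proposition has content. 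The entries $u^{(k,l)s}_{(i,j)r}=q^{ks}_{ir}q^{ls}_{jr}$ of $Q_{(V,E)}$ satisfy relations that do \emph{not} follow from the $\mathcal{C}^{sym}_{(V,E)}$ axioms: in the uniform case, the quantum permutation relations together with proposition \ref{edge_presrv_exp} give $\sum_{l}u^{(k,l)s}_{(i,j)r}=q^{ks}_{ir}$ (sum over $l$ with $E^k_l\neq\phi$) for \emph{every} $j$ with $(i,j)r\in E$, so these partial sums are projections and are independent of $j$. Now take $(V,E)$ to be the doubled $K_4$ (two edges between every ordered pair of distinct vertices); by corollary 3.21 of \cite{Goswami2023} and the classical fact that Bichon's and Banica's notions differ on $K_4$, this graph satisfies the hypothesis $\mathcal{C}^{Bic}_{(V,E)}\neq\mathcal{C}^{sym}_{(V,E)}$. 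The classical automorphism $\psi$ fixing all vertices and all edges except that it swaps the two edges of $E^1_2$ gives a member of $\mathcal{C}^{Bic}_{(V,E)}\subseteq\mathcal{C}^{sym}_{(V,E)}$ whose $0$--$1$ edge entries $v^{\sigma}_{\tau}$ (equal to $1$ iff $\psi(\tau)=\sigma$) violate that $j$-independence: $\sum_{l}v^{(1,l)1}_{(1,2)1}=0$ while $\sum_{l}v^{(1,l)1}_{(1,3)1}=1$. So this member does not factor through $Q_{(V,E)}$ entry-to-entry, your surjection $\phi:\mathcal{Q}^{sym}_{(V,E)}\to Q_{(V,E)}$ is a \emph{proper} quotient map, and --- since Bichon-ness descends along quotients but does not lift --- your contrapositive then yields nothing: $(Q_{(V,E)},\Delta,\beta)$ being of Bichon type would say nothing about $\mathcal{Q}^{sym}_{(V,E)}$ or about the rest of $\mathcal{C}^{sym}_{(V,E)}$.

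The paper's proof runs the quotient argument in the only direction in which it is available: $Q_{(V,E)}$ \emph{is} universal in its own category $\mathcal{C}_{(V,E)}$ (by construction), so it suffices to exhibit one member of $\mathcal{C}_{(V,E)}$ whose induced edge action fails to be a quantum permutation. Such a member is built explicitly from Banica's quantum automorphism group $(Q,\Delta)$ of the underlying single edged graph $(V,\overline{E})$, acting diagonally in the label variable, $\alpha(\chi_i\otimes\chi_r)=\sum_{k,s}\chi_k\otimes\chi_s\otimes\delta_{s,r}u^k_i$; its edge matrix has entries $\delta_{s,r}u^k_iu^l_j$, which fail to form a magic unitary exactly when $\mathcal{C}^{Bic}_{(V,\overline{E})}\neq\mathcal{C}^{sym}_{(V,\overline{E})}$, and corollary 3.21 of \cite{Goswami2023} converts that condition into the hypothesis of the proposition. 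To salvage your approach you would have to abandon the identification with $\mathcal{Q}^{sym}_{(V,E)}$ and instead produce, as the paper does, a concrete non-Bichon object in a category where $Q_{(V,E)}$ genuinely is the universal object.
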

\begin{proof}
As $Q_{(V,E)}$ is universal in $\mathcal{C}_{(V,E)}$, it is enough to show that there exists a member of $\mathcal{C}_{(V,E)}$ which is of \textbf{non Bichon} type, that is, not a member of $\mathcal{C}^{Bic}_{(V,E)}$ whenever $\mathcal{C}^{Bic}_{(V,E)}\neq \mathcal{C}^{sym}_{(V,E)}$. From corollary 3.21 of \cite{Goswami2023} it follows that
\begin{equation*}
\mathcal{C}^{Bic}_{(V,E)}\neq \mathcal{C}^{sym}_{(V,E)}\quad\iff\quad \mathcal{C}^{Bic}_{(V,\overline{E})}\neq \mathcal{C}^{sym}_{(V,\overline{E})}
\end{equation*}
where $(V,\overline{E})$ is the underlying single edged graph of $(V,E)$. The underlying single edged graph of $(V,E)$ is a single edged graph with same vertex set $V$ and edge set $\overline{E}\subseteq V\times V $ which is given by
\begin{equation*}
    (i,j)\in \overline{E}\quad\text{if and only if}\quad E^i_j\neq \phi.
\end{equation*}
Let $(Q,\Delta)$ be the quantum automorphism group of $(V,\overline{E})$ preserving its quantum symmetry in Banica's sense. We assume $U=(u^i_j)_{i,j\in V}$ to be the matrix of canonical generators of $Q$ satisfying \textbf{quantum permutation relations} and $UA=AU$ where $A$ is the adjacency matrix of $(V,E)$. As $U$ commutes with $A$, from theorem 2.2 of \cite{Banica2005} we have $$u^k_iu^l_j=0 \quad\text{if}\quad |E^k_l|\neq |E^i_j|.$$
 Let us define a map $\alpha:C(V)\otimes C(X_N)\rightarrow C(V)\otimes C(X_N)\otimes Q$ by the following formula:
 \begin{equation}\label{Q^ban_action_exp}
 	\alpha(\chi_i\otimes \chi_r)=\sum^N_{\substack{s=1\\k\in V}}\chi_{k}\otimes \chi_s\otimes \delta_{s,r}u^{k}_{i}
\end{equation}
It is easy to see that $\alpha$ is a co-action of $(Q,\Delta)$ on the algebra $C(V)\otimes C(X_N)$ satisfies (1) of definition \ref{main_def_exp}. From equation \ref{Q^ban_action_exp}, it further follows that the coefficients of the co-representation matrix of $\alpha$ satisfy identities in proposition \ref{edge_presrv_exp}. Therefore we have  $(Q,\Delta,\alpha)\in \mathcal{C}_{(V,E)}$. Moreover, the action of $(Q,\Delta)$ on the edge space $L^2(E)$ is given by,   
\begin{equation*}
	\beta(\chi_{(i,j)r})=\sum_{(k,l)s\in E}\chi_{(k,l)s}\otimes \delta_{s,r}u^k_iu^l_j\quad\text{for all}\quad (i,j)r\in E.
\end{equation*}
 This action $\beta$ is of \textbf{non-Bichon} type because $\beta$ is not necessarily a \textbf{quantum permutation} on the edge set. The fact that $\beta$ is not necessarily a \textbf{quantum permutation} follows from the condition that  $(Q,\Delta, U^{(2)})\notin \mathcal{C}^{Bic}_{(V,\overline{E})}$ whenever  $\mathcal{C}^{Bic}_{(V,\overline{E})}\neq \mathcal{C}^{sym}_{(V,\overline{E})}$.  
\end{proof}
\section{The non-uniform case}\label{non_uniform}
Let $(V,E)$ be a multigraph which is \textbf{not necessarily uniform}. After fixing an  edge-labeling of $(V,E)$ and considering $N=sup_{i,j\in v}\{|E^i_j|\}$, the category $\mathcal{C}_{(V,E)}$  defined in definition \ref{main_def_exp} makes sense. The universal object in $\mathcal{C}_{(V,E)}$, namely $Q_{(V,E)}$,  exists and is the \textbf{universal C* algebra} generated by coefficients of the matrix $\{q^{ks}_{ir}\:|\:k,i\in V;s,r=1,..,N\}$ satisfying the following relations:
\begin{enumerate}
\item The coefficients of the matrix $(q^{ks}_{ir})_{(ks),(ir)}$ satisfy \textbf{quantum permutation relations}.
\item $q^{ks}_{ir} q^{ls'}_{jr}=0$ if $E^k_l=\phi$ and $(i,j)r\in E$.
\item $q^{ks}_{ir}q^{ls'}_{jr}=0$  if $E^k_l\neq\phi$, $s\neq s'$ and $(i,j)r\in E$
\item  $q^{ks}_{ir}q^{ls}_{jr}=0$ if $E^k_l\neq\phi$, $s>|E^k_l|$ and $(i,j)r\in E$.
\end{enumerate}
The co-product $\Delta$ on $Q_{(V,E)}$ is the matrix co-product identical to the uniform case. 
\subsection*{A word of caution:} In case of a non-uniform multigraph $(V,E)$, action of the CQG $Q_{(V,E)}$ on the edge space $L^2(E)$ might \textbf{NOT} be faithful even in a classical scenario as we see for the multigraph in figure \ref{disjoint_union} where the vertex set is given by $\{a,b,c,d\}$, and the edge set consists of $7$ elements with $5$ edges from $a$ to $b$ and $2$ edges from $c$ to $d$.  It can be seen that keeping all the vertices and edges fixed, the absent (dashed) edges from $c$ to $d$ can be freely permuted producing identity isomorphism of the multigraph each time. To fix this issue, our initial action can be restricted to the ``permissible" pairs as we will see below.
\begin{figure}[h]
	\centering
	\begin{tikzpicture}[scale=4]
		\node at (-0.5,0) {$\bullet$};
		\node at (-1.5,0) {$\bullet$};
		\node at (0.5,0) {$\bullet$};
		\node at (1.5,0) {$\bullet$};
        \draw[thick, out=60, in=120, looseness=1.2] (0.5,0) to node{\midarrow} (1.5,0); 
        \draw[dashed, out=-60, in=-120, looseness=1.2] (0.5,0) to node{\midarrow} (1.5,0); 
        \draw[thick, out=60, in=120, looseness=1.2] (-1.5,0) to node{\midarrow} (-0.5,0); 
        \draw[thick, out=-60, in=-120, looseness=1.2] (-1.5,0) to node{\midarrow} (-0.5,0); 
		\draw[thick] (-1.5,0) edge[bend left] node{\midarrow} (-0.5,0);
		\draw[thick] (-1.5,0) edge node{\midarrow} (-0.5,0);
		\draw[thick] (-1.5,0) edge[bend right] node{\midarrow} (-0.5,0);
		\draw[dashed] (0.5,0) edge[bend right] node{\midarrow} (1.5,0);
		\draw[dashed] (0.5,0) edge node{\midarrow} (1.5,0);
		\draw[thick] (0.5,0) edge[bend left] node{\midarrow} (1.5,0);
		\node at (-1.5,-.2) {$a$};
		\node at (-0.5,-0.2) {$b$};
		\node at (0.5,-0.2) {$c$};
		\node at (1.5, -0.2) {$d$};
	\end{tikzpicture}
	\caption{A non-uniform multigraph}
	\label{disjoint_union} 
\end{figure}
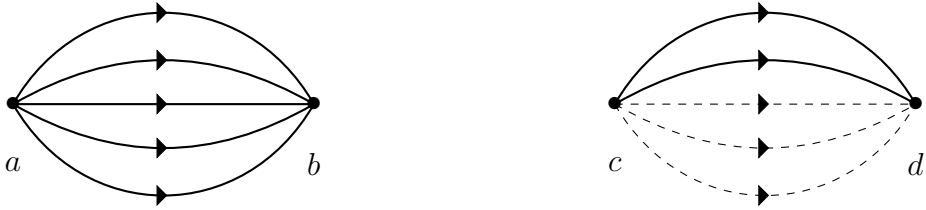

\begin{defn}
	Given a multigraph $(V,E)$ and its fixed representation,
a pair $(k,s)\in V\times X_N$ is called \textbf{permissible} if there exists $\tau\in E$ such that $\tau=(k,l)s$ or $\tau=(l,k)s$ for some $l\in V$.	
\end{defn}
\begin{prop}
Let $\alpha$ be the canonical co-action of $Q_{(V,E)}$ on $C(V)\otimes C(X_N)$. Then it follows that 
\begin{equation*}
	\alpha(\mathcal{P})\subseteq \mathcal{P}\otimes \mathcal{A}\quad\text{where}\quad \mathcal{P}=\text{linear span}\:\{\chi_k\otimes \chi_s\:|\:(k,s)\:\text{is a permissible pair}\}.
\end{equation*}
\end{prop}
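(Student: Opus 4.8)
The plan is to reduce the stated inclusion to a vanishing condition on the entries of the canonical co-representation matrix. Since
\[
\alpha(\chi_i\otimes\chi_r)=\sum^N_{\substack{s=1\\k\in V}}\chi_k\otimes\chi_s\otimes q^{ks}_{ir},
\]
the inclusion $\alpha(\mathcal{P})\subseteq\mathcal{P}\otimes\mathcal{A}$ holds precisely when, for every permissible pair $(i,r)$ and every non-permissible pair $(k,s)$, one has $q^{ks}_{ir}=0$. So I would fix a permissible $(i,r)$ and aim to show that all ``forbidden'' entries in its column vanish, using the defining relations (1)--(4) of $Q_{(V,E)}$ together with the column-sum identity coming from the quantum permutation relations.

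The argument splits according to why $(i,r)$ is permissible. If $i$ is the source of an edge, say $(i,j)r\in E$, then I would insert the partition of unity for the column $(j,r)$:
\[
q^{ks}_{ir}=q^{ks}_{ir}\sum^N_{\substack{s'=1\\l\in V}}q^{ls'}_{jr}=\sum^N_{\substack{s'=1\\l\in V}}q^{ks}_{ir}q^{ls'}_{jr},
\]
and then eliminate summands: relation (2) kills every term with $E^k_l=\phi$, relation (3) kills every term with $s'\neq s$, and relation (4) kills every term with $s>|E^k_l|$. The only terms that can survive therefore satisfy $E^k_l\neq\phi$, $s'=s$ and $s\leq|E^k_l|$, which is exactly the condition that $(k,l)s\in E$; but that would make $(k,s)$ permissible as a source. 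Since $(k,s)$ is non-permissible, no term survives and $q^{ks}_{ir}=0$.

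If instead $i$ is the target of an edge, say $(a,i)r\in E$, I would run the mirror computation by expanding against the column $(a,r)$:
\[
q^{ks}_{ir}=\Bigl(\sum^N_{\substack{\sigma=1\\p\in V}}q^{p\sigma}_{ar}\Bigr)q^{ks}_{ir}=\sum^N_{\substack{\sigma=1\\p\in V}}q^{p\sigma}_{ar}q^{ks}_{ir},
\]
and apply relations (2)--(4) to the edge $(a,i)r$. Now the surviving terms must satisfy $E^p_k\neq\phi$, $\sigma=s$ and $s\leq|E^p_k|$, i.e.\ $(p,k)s\in E$, which forces $(k,s)$ to be permissible as a target. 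This again contradicts non-permissibility, so $q^{ks}_{ir}=0$. Combining the two cases yields the claim.

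The only point requiring care -- and the step I expect to be the main (if minor) obstacle -- is the index bookkeeping in the target case: there the fixed column against which I expand is $(a,r)$ rather than $(i,r)$, so I must check that relations (2)--(4) are being applied with the correct source/target assignment for the edge $(a,i)r$ and that the correct (column) orthogonality partition of unity is used. Once this matching is verified, the remainder is a routine application of the defining relations, so I expect no further difficulty.
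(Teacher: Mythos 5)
Your proposal is correct and takes essentially the same route as the paper: the paper's proof also fixes a permissible $(i,r)$ with $(i,j)r\in E$, writes $q^{ks}_{ir}=\sum_{l,s'}q^{ks}_{ir}q^{ls'}_{jr}$ via the quantum permutation partition of unity, and kills every summand using relations (2)--(4), since any surviving term would force $(k,l)s\in E$ and hence permissibility of $(k,s)$. The only difference is that the paper dispatches the target case with a ``without loss of generality,'' whereas you carry out the mirror computation explicitly --- and with the correct left/right ordering of factors $q^{p\sigma}_{ar}q^{ks}_{ir}$ matching the source/target form of the relations, which is precisely the point that needed care.
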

\begin{proof}
Let $i,k\in V$ and $r,s\in \{1,..,N\}$ be such that $(i,r)$ is \textbf{permissible} but $(k,s)$ is not. Without loss of generality, we assume that there exists $j\in V$ such that $\tau=(i,j)r$.
To prove the proposition, it is enough to show that $q^{ks}_{ir}=0$.
\begin{equation}\label{permissible_pair_prsv}
	q^{ks}_{ir}=\sum^N_{\substack{s'=1\\l\in V}}q^{ks}_{ir}q^{ls'}_{jr}=0.
\end{equation}
\end{proof}
To produce a quantum automorphism group of $(V,E)$, we need to consider the Woronowicz C* subalgebra generated by elements corresponding to \textbf{permissible} pairs. More precisely, we have the following proposition:
\begin{prop}
Let $Q'_{(V,E)}$ be the C* subalgebra of $Q_{(V,E)}$ given by,
\begin{align*}
	Q'_{(V,E)}&=C^*\{q^{ks}_{ir}\:|\:\text{$(k,s)$ and $(i,r)$ are ``permissible" pairs}\}\\
	&=C^*\{q^{ks}_{ir}q^{ls}_{jr}\:|\:(k,l)s, (i,j)r\in E\}. 
\end{align*}
The co-product $\Delta$ on $Q_{(V,E)}$ restricts to the subalgebra $Q'_{(V,E)}$ making it a Woronowicz C* subalgebra of $Q_{(V,E)}$. Moreover, there exists a faithful \textbf{non-Bichon} type co-action  $\beta$ of $Q'_{(V,E)}$ on $(V,E)$ preserving its \textbf{quantum symmetry in our sense.}
\end{prop}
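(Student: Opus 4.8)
The plan is to prove the three assertions in turn: that the two descriptions of $Q'_{(V,E)}$ coincide, that $\Delta$ restricts to it as a Woronowicz $C^*$-subalgebra, and that the co-action $\beta$ co-restricts to a faithful non-Bichon action.

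First I would establish the equality of the two generating sets. The inclusion $C^*\{q^{ks}_{ir}q^{ls}_{jr}\mid (k,l)s,(i,j)r\in E\}\subseteq C^*\{q^{ks}_{ir}\mid (k,s),(i,r)\text{ permissible}\}$ is immediate, since $(k,l)s\in E$ forces both $(k,s)$ and $(l,s)$ to be permissible, and likewise for $(i,j)r\in E$. For the reverse inclusion I would take a single generator $q^{ks}_{ir}$ with $(k,s)$ and $(i,r)$ permissible and rewrite it as a finite sum of edge-indexed products. Since $(i,r)$ is permissible there is $j\in V$ with either $(i,j)r\in E$ or $(j,i)r\in E$. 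In the first case, inserting the completeness relation $\sum_{l,s'}q^{ls'}_{jr}=1$ on the right and using Proposition \ref{edge_presrv_exp} together with relation (4) of the non-uniform list to annihilate all but the surviving terms gives $q^{ks}_{ir}=\sum_{l:(k,l)s\in E}q^{ks}_{ir}q^{ls}_{jr}$. In the second case I would insert $\sum_{m,s'}q^{ms'}_{jr}=1$ on the left and argue symmetrically to obtain $q^{ks}_{ir}=\sum_{m:(m,k)s\in E}q^{ms}_{jr}q^{ks}_{ir}$. Either way $q^{ks}_{ir}$ lies in the second algebra, so the two algebras coincide.

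Next I would check that $\Delta$ preserves $Q'_{(V,E)}$. Writing $u^{(k,l)s}_{(i,j)r}=q^{ks}_{ir}q^{ls}_{jr}$ as in Proposition \ref{bimod_exp}, the matrix co-product identity already established there, namely $\Delta(u^\sigma_\tau)=\sum_{\tau'\in E}u^\sigma_{\tau'}\otimes u^{\tau'}_\tau$, shows directly that $\Delta$ sends each generator of $Q'_{(V,E)}$ into $Q'_{(V,E)}\otimes Q'_{(V,E)}$, whence $\Delta(Q'_{(V,E)})\subseteq Q'_{(V,E)}\otimes Q'_{(V,E)}$. Since $Q'_{(V,E)}$ is, by the first step, exactly the $C^*$-algebra generated by the matrix coefficients of the bi-unitary co-representation $U=(u^\sigma_\tau)_{\sigma,\tau\in E}$ (its conjugate coefficients lying in $Q'_{(V,E)}$ as well), the standard theory of compact quantum groups applies: the density conditions follow from bi-unitarity of $U$, and $Q'_{(V,E)}$ becomes a Woronowicz $C^*$-subalgebra with co-product the restriction of $\Delta$.

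Finally, for the co-action I would observe that the map $\beta$ of Proposition \ref{bimod_exp} already has all of its co-representation coefficients $u^\sigma_\tau$ inside $Q'_{(V,E)}$, so $\beta$ co-restricts to a bi-unitary co-representation $\beta:L^2(E)\to L^2(E)\otimes Q'_{(V,E)}$. It remains non-Bichon and preserves quantum symmetry in our sense by the very same computations as in the uniform case, carried out in the preceding propositions. Faithfulness is then automatic: by the equality proved in the first step, $Q'_{(V,E)}$ is generated as a $C^*$-algebra precisely by the coefficients of $\beta$, so no nontrivial Woronowicz quotient can act trivially on $L^2(E)$ --- which is exactly the defect, illustrated by figure \ref{disjoint_union}, that the passage to the subalgebra was designed to remove. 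I expect the main obstacle to be the first step: one must carefully combine the completeness relations of the quantum permutation matrix with the vanishing relations of Proposition \ref{edge_presrv_exp} and relation (4), handling both the source and the target cases of permissibility, to express each permissible single generator as a genuine sum of edge-indexed products.
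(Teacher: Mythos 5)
Your first two steps are sound and in fact supply details the paper leaves implicit. The identity $q^{ks}_{ir}=\sum_{l:(k,l)s\in E}q^{ks}_{ir}q^{ls}_{jr}$ (and its left-handed analogue when $i$ occurs as a target), obtained by inserting the completeness relation and killing terms via relations (2)--(4), correctly establishes the equality of the two descriptions of $Q'_{(V,E)}$; and your route to the Woronowicz subalgebra structure via the matrix co-product identity $\Delta(u^{\sigma}_{\tau})=\sum_{\tau'\in E}u^{\sigma}_{\tau'}\otimes u^{\tau'}_{\tau}$ (which does survive in the non-uniform case, with relation (4) now needed to cut the double sum down to edges) together with bi-unitarity of $U$ is legitimate, whereas the paper deduces the restriction of $\Delta$ directly from equation \ref{permissible_pair_prsv}. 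Your faithfulness argument coincides with the paper's: $Q'_{(V,E)}$ is by construction generated by the coefficients of $\beta$.

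The genuine gap is in your final step, where you assert that the action is non-Bichon ``by the very same computations as in the uniform case.'' The uniform-case argument (proposition \ref{non_Bichon_type_action}) cannot be repeated verbatim here: it takes the Banica quantum automorphism group $Q$ of the underlying single edged graph $(V,\overline{E})$ and checks that $q^{ks}_{ir}:=\delta_{s,r}u^k_i$ defines a member of $\mathcal{C}_{(V,E)}$. In the non-uniform case that check fails, because relation (4) forces $u^k_iu^l_j=0$ whenever $E^k_l\neq\phi$, $(i,j)r\in E$ and $r>|E^k_l|$ --- in effect $u^k_iu^l_j=0$ whenever $|E^k_l|\neq|E^i_j|$ --- and a quantum permutation matrix commuting only with the $0$-$1$ adjacency matrix of $\overline{E}$ need not satisfy this. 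Concretely, for the multigraph of figure \ref{disjoint_union} the underlying graph is two disjoint arrows, whose automorphism group already classically swaps $(a,b)$ with $(c,d)$; that swap satisfies your conditions (1)--(3) but violates relation (4), since $5=|E^a_b|\neq|E^c_d|=2$, so it does not give an element of $\mathcal{C}_{(V,E)}$ at all. This is exactly why the paper's proof replaces $(V,\overline{E})$ by the underlying \emph{weighted} single edged graph $(V,\overline{E},w)$ with $w((i,j))=|E^i_j|$: commutation of $U$ with the weighted adjacency matrix yields, via theorem 2.2 of \cite{Banica2005}, the relation $u^k_iu^l_j=0$ for $|E^k_l|\neq|E^i_j|$, which is precisely what relations (2) and (4) require, and only then does the rest of the uniform-case argument go through.
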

\begin{proof}
The fact that the co-product $\Delta$ of $Q_{(V,E)}$ restricts to $Q'_{(V,E)}$ follows from equation \ref{permissible_pair_prsv}. It is easy to see that induced action of $Q'_{(V,E)}$ on the edge space $L^2(E)$ is faithful. To prove that this action is an action of \textbf{non-Bichon} type, we use identical arguments used in proof of proposition \ref{non_Bichon_type_action} for the \textbf{underlying weighted single edged graph} $(V,\overline{E},w)$ where $w:\overline{E}\rightarrow \mathbb{C}$ is a weight function on the set of edges $\overline{E}$ given by
\begin{equation*}
    w((i,j))=|E^i_j|\quad \text{for all}\quad (i,j)\in \overline{E}.
\end{equation*}
\end{proof}

\bibliographystyle{alphaurl}
\bibliography{quantum_symmetry_in_multigraphs}

@Article{Banica2005,
  author   = {Banica, Teodor},
  title    = {Quantum automorphism groups of homogeneous graphs},
  journal  = {Journal of Functional Analysis},
  year     = {2005},
  volume   = {224},
  number   = {2},
  pages    = {243--280},
  issn     = {0022-1236},
  doi      = {10.1016/j.jfa.2004.11.002},
  keywords = {16W35 (05C25 18D10 46L37 46L89)},
  mrnumber = {2146039},
}

@Article{Bichon2003,
  author   = {Bichon, Julien},
  title    = {Quantum automorphism groups of finite graphs},
  journal  = {Proceedings of the American Mathematical Society},
  year     = {2003},
  volume   = {131},
  number   = {3},
  pages    = {665--673},
  issn     = {0002-9939},
  doi      = {10.1090/S0002-9939-02-06798-9},
  keywords = {16W35 (58B32)},
  mrnumber = {1937403},
}

@Article{Wang1998,
  author   = {Wang, Shuzhou},
  title    = {Quantum symmetry groups of finite spaces},
  journal  = {Communications in Mathematical Physics},
  year     = {1998},
  volume   = {195},
  number   = {1},
  pages    = {195--211},
  issn     = {0010-3616},
  doi      = {10.1007/s002200050385},
  keywords = {58B30 (16W30 46L99)},
  mrnumber = {1637425},
}

@article{daws2022quantum,
  title={Quantum graphs: different perspectives, homomorphisms and quantum automorphisms},
  author={Daws, Matthew},
  journal={arXiv preprint arXiv:2203.08716},
  year={2022}
}

@article{duan2012zero,
  title={Zero-error communication via quantum channels, noncommutative graphs, and a quantum Lov{\'a}sz number},
  author={Duan, Runyao and Severini, Simone and Winter, Andreas},
  journal={IEEE Transactions on Information Theory},
  volume={59},
  number={2},
  pages={1164--1174},
  year={2012},
  publisher={IEEE}
}

@article{weaver2015quantum,
  title={Quantum graphs as quantum relations},
  author={Weaver, Nik},
  journal={arXiv preprint arXiv:1506.03892},
  year={2015}
}

@article{chirvasitu2022random,
  title={Random quantum graphs},
  author={Chirvasitu, Alexandru and Wasilewski, Mateusz},
  journal={Transactions of the American Mathematical Society},
  volume={375},
  number={5},
  pages={3061--3087},
  year={2022}
}

@article{brannan2020bigalois,
  title={Bigalois extensions and the graph isomorphism game},
  author={Brannan, Michael and Chirvasitu, Alexandru and Eifler, Kari and Harris, Samuel and Paulsen, Vern and Su, Xiaoyu and Wasilewski, Mateusz},
  journal={Communications in Mathematical Physics},
  volume={375},
  number={3},
  pages={1777--1809},
  year={2020},
  publisher={Springer}
}

@phdthesis{hossain2023quantum,
  title={Quantum symmetry in multigraphs and its applications in physical models},
  author={Hossain, Sk Asfaq},
  year={2023},
  school={Indian Statistical Institute, Kolkata}
}

@Article{Goswami2023,
  author        = {Goswami, Debashish and Hossain, Sk Asfaq},
  title         = {Quantum symmetry in multigraphs (part I)},
  year          = {2023},
  month         = feb,
  abstract      = {We introduce various notions of quantum symmetry in a directed or undirected multigraph with no isolated vertex and explore relations among them. If the multigraph is single edged (that is, a simple graph where loops are allowed), all our notions of quantum symmetry reduce to already existing notions of quantum symmetry provided by Bichon and Banica. Our constructions also show that any multigraph with at least two pairs of vertices with multiple edges among them possesses genuine quantum symmetry.},
  archiveprefix = {arXiv},
  copyright     = {arXiv.org perpetual, non-exclusive license},
  doi           = {10.48550/ARXIV.2302.08726},
  eprint        = {2302.08726},
  file          = {:Goswami2023 - Quantum Symmetry in Multigraphs (part I).pdf:PDF},
  keywords      = {Quantum Algebra (math.QA), Operator Algebras (math.OA), Quantum Physics (quant-ph), FOS: Mathematics, FOS: Physical sciences},
  primaryclass  = {math.QA},
  publisher     = {arXiv},
}
\end{document}